\newtheorem{theorem}{Theorem}
\newtheorem{corollary}{Corollary}
\newtheorem{proposition}{Proposition}
\newtheorem{remark}{Remark}
\title{Complete Foliations of Space Forms by Hypersurfaces}
\author{A. Caminha}
\address{Departamento de Matem\'atica, Universidade Federal do Cear\'a, Fortaleza,
Cear\'a, Brazil. 60455-760}
\email{antonio.caminha@gmail.com}
\author{P. Sousa}
\address{Departamento de Matem\'atica, Universidade Federal do Piau\'{\i}, Teresina,
Piau\'{\i}, Brazil. 64049-550}
\email{pauloalexandre.ufpi@gmail.com}
\author{F. Camargo}
\address{Departamento de Matem\'atica, Universidade Federal de Campina Grande, Campina Grande,
Paraíba, Brazil. 58109-970}
\email{feccamargo@yahoo.com.br}
\subjclass[2000]{Primary 53C42; Secondary 53C12, 53C40}
\keywords{Graphs; Riemannian foliations; Bernstein-type theorem}
\thanks{The first author is partially supported by CNPq}
\begin{document}
\maketitle

\begin{abstract}
We study foliations of space forms by complete hypersurfaces, under some mild conditions on its higher order mean curvatures. In particular, in Euclidean space we obtain a Bernstein-type theorem for graphs whose mean and scalar curvature do not change sign but may otherwise be nonconstant. We also establish the nonexistence of foliations of the standard sphere whose leaves are complete and have constant scalar curvature, thus extending a theorem of Barbosa, Kenmotsu and Oshikiri. For the more general case of {\em r-}minimal foliations of the Euclidean space, possibly with a singular set, we are able to invoke a theorem of Ferus to give conditions under which the nonsigular leaves are foliated by hyperplanes. 
\end{abstract}

\section{Introduction}

Codimension-one foliations of Riemannian spaces have been studied, through the geometric point of view, since the beginnings of the last century, when S. Bernstein~\cite{Bernstein:15}, proved that the only entire minimal graphs in $\mathbb R^3$ are planes. This result was later extended by J. Simons~\cite{Simons:68}, for entire minimal graphs in $\Bbb{R}^{n+1}$ up to $n=7$, and disproved by E. Bombieri, E. de Giorgi and E. Giusti~\cite{Bombieri:68} in all higher dimensions. We refer the reader to a paper of B. Nelli and M. Soret~\cite{Nelli:05} for a brief account of interesting related results on Bernstein's problem, as it became known these days.

A natural extension to the problem above is to consider codimension one complete foliations of space forms, whose leaves have constant mean curvature. In this respect, J. L. Barbosa, K. Kenmotsu and G. Oshikiri~\cite{Barbosa:91} proved that such a foliation must have minimal leaves if the ambient space is flat, and does not exist in the sphere.
Related results for graphs in products $M\times\mathbb R$ were also obtained by J. L. Barbosa, G. P. Bessa and J. F. Montenegro~\cite{Barbosa:08}, by imposing some restrictions on the fundamental tone of the Laplacian on the graph.

In this paper we study foliations of space forms by complete hypersurfaces, asking that the leaves have bounded second fundamental form and two consecutive higher order mean curvatures not changing signs. For the particular case of a graph in Euclidean space whose defining function satisfies certain growth conditions, in Theorem~\ref{thm:application to graphs} we are thus able to use a result of D. Ferus (Theorem 5.3 of~\cite{Dajczer:90}) to get a lower estimate on the relative nullity of the graph; we also discuss some examples that show that our hypotheses are not superfluous. As an interesting consequence, we obtain in Corollary~\ref{coro:application to graphs 3} a Bernstein-type theorem for such a graph, provided its mean and scalar curvature do not change sign (but may otherwise be nonconstant).

For the case of general, transversely orientable foliations of space forms, we follow the approach of~\cite{Barbosa:91}, computing in Proposition~\ref{prop:divergence on PrX on L and on the ambient} the divergence of the vector field $P_r\overline D_NN$ on a leaf of the foliation; here, $N$ is a unit vector field on the ambient space, normal to the leaves, and $P_r$ is the $r-$th Newton transformation of a leaf with respect to $N$. We are then able to extend one of the above mentioned theorems of~\cite{Barbosa:91}, proving the nonexistence of foliations of the standard sphere whose leaves are complete and have constant scalar curvature greater than one. We also consider a more direct generalization of the problem of Bernstein, i.e., that of the study of $r-$minimal foliations (possibly with a singular set) of the Euclidean space. In this setting, we are also able to rely to Ferus' theorem to prove that the nonsigular leaves are foliated by hyperplanes of a certain codimension, provided the $r-$th curvature of them does not vanish. We remark that problems of this kind have already been considered by the first author in the Lorentz setting~\cite{Caminha:06}. 

Besides the formula for the divergence of $P_r\overline D_NN$, another central tool for our work is a further elaboration, undertaken in Proposition~\ref{prop:first corollary of Yau 76} and Corollary~\ref{coro:Lr versio of corollary of Yau 76}, of S. T. Yau's extension (cf.~\cite{Yau:76}) of H. Hopf's theorem on subharmonic functions on complete noncompact Riemannian manifolds.

\section{Graphs in Euclidean space}

In what follows, unless otherwise stated, all spaces under consideration are supposed to be connected.

In the paper~\cite{Yau:76}, S. T. Yau obtained the following version of Stokes' theorem on an $n-$dimensional,
complete noncompact Riemannian manifold $M$: if $\omega\in\Omega^{n-1}(M)$, an
$n-1$ differential form on $M$, then there exists a
sequence $B_i$ of domains on $M$, such that $B_i\subset B_{i+1}$, $M=\cup_{i\geq 1}B_i$ and
$$\lim_{i\rightarrow+\infty}\int_{B_i}\omega=0.$$

By applying this result to $\omega=\iota_{\nabla f}$, where $f:M\rightarrow\mathbb R$ is a smooth function, $\nabla f$ denotes its gradient and $\iota_{\nabla f}$ the contraction in the direction of $\nabla f$, Yau established the following extension of H. Hopf's theorem on a complete noncompact Riemannian manifold: a subharmonic function whose gradient has integrable norm on $M$ must actually be harmonic.

We begin by extending the above result a little further. In what follows, we suppose $M$ oriented by the volume element $dM$, and let $\mathcal L^1(M)$ be the space of Lebesgue integrable functions on $M$. 

\begin{proposition}\label{prop:first corollary of Yau 76}
Let $X$ be a smooth vector field on the $n$ dimensional complete, noncompact, oriented Riemannian manifold $M^n$,
such that ${\rm div}X$ does not change sign on $M$. If $|X|\in\mathcal L^1(M)$, then ${\rm div}X=0$ on $M$.
\end{proposition}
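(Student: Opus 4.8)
The plan is to use Yau's version of Stokes' theorem directly, applied to the contraction form $\omega = \iota_X\,dM$. Recall that for this $(n-1)$-form one has $d\omega = ({\rm div}X)\,dM$, so integrating over the exhausting domains $B_i$ given by Yau's theorem yields, via the ordinary Stokes theorem on each $B_i$,
\[
\int_{B_i} ({\rm div}X)\,dM = \int_{\partial B_i}\iota_X\,dM \longrightarrow 0
\]
as $i\to\infty$. (Here I am implicitly invoking the precise statement that Yau's sequence $B_i$ can be taken with piecewise smooth boundary so that Stokes applies; this is part of what is recorded in~\cite{Yau:76}.) Since $B_i\subset B_{i+1}$ and $M=\bigcup_{i\ge 1}B_i$, the left-hand side converges to $\int_M ({\rm div}X)\,dM$ by monotone convergence, once we know this integral makes sense.

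The step that makes the hypotheses bite is exactly the integrability of ${\rm div}X$. Here I would use that $|{\rm div}X|\le \sqrt{n}\,|\nabla X|$ pointwise — wait, that bound is not available from $|X|\in\mathcal L^1$ alone. So instead the correct route is: the sign condition on ${\rm div}X$ lets us apply the monotone convergence theorem to the sequence $\int_{B_i}({\rm div}X)\,dM$ regardless of whether the limit is finite; the limit is either $\int_M({\rm div}X)\,dM\in[0,+\infty]$ (if ${\rm div}X\ge 0$) or lies in $[-\infty,0]$ (if ${\rm div}X\le 0$). But we have just shown that limit is $0$. Hence $\int_M ({\rm div}X)\,dM = 0$ with ${\rm div}X$ of constant sign, which forces ${\rm div}X\equiv 0$ on $M$ by continuity. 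The only genuine point requiring care is therefore checking that the boundary integrals $\int_{\partial B_i}\iota_X\,dM$ do tend to zero under the hypothesis $|X|\in\mathcal L^1(M)$; this is precisely the content of Yau's refinement, since $\iota_X\,dM$ has comass controlled by $|X|$, and $|X|$ being in $\mathcal L^1(M)$ is exactly the hypothesis under which Yau's construction produces $B_i$ with $\int_{B_i}d(\iota_X\,dM)\to 0$.

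To make the argument self-contained I would spell out the identity $d(\iota_X\,dM) = ({\rm div}X)\,dM$ (a standard consequence of Cartan's formula $\mathcal L_X\,dM = d\,\iota_X\,dM$ together with $\mathcal L_X\,dM = ({\rm div}X)\,dM$), note that $\iota_X\,dM = \iota_{\nabla f}\,dM$ when $X=\nabla f$ recovers Yau's original statement, and then state that Yau's theorem applies verbatim to the form $\omega=\iota_X\,dM$ provided its "norm" — equivalently $|X|$ — is integrable. The main obstacle, such as it is, is purely one of citing the right strength of Yau's result: one must make sure that the sequence $B_i$ depends on $\omega$ (hence on $X$) and delivers $\lim_i\int_{B_i}d\omega = 0$, rather than merely $\lim_i\int_{B_i}\omega=0$ for a fixed form. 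With that in hand, the sign hypothesis converts "the integral of ${\rm div}X$ over $M$ is zero" into "${\rm div}X$ vanishes identically", completing the proof.
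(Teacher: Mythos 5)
Your proposal is correct and follows essentially the same route as the paper: set $\omega=\iota_X\,dM$, observe that $|\omega|$ is controlled by $|X|$ (in fact $|\omega|=|X|$, as one checks in an orthonormal coframe) so that $\omega$ is integrable, invoke Yau's exhaustion $B_i$ with $\int_{B_i}d\omega\to 0$ together with $d\omega=({\rm div}X)\,dM$, and conclude from the sign condition. Your remark that the relevant form of Yau's theorem is $\lim_i\int_{B_i}d\omega=0$ for an integrable $(n-1)$-form $\omega$ (not $\lim_i\int_{B_i}\omega=0$ as literally printed) is exactly the reading the paper's own proof uses.
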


\begin{proof}
Suppose, without loss of generality, that ${\rm div}X\geq 0$ on $M$. Let $\omega$ be the $(n-1)-$form in $M$ given by $\omega=\iota_XdM$, i.e., the contraction of $dM$ in the direction of a smooth vector field $X$ on $M$. If $\{e_1,\ldots,e_n\}$ is an orthonormal frame on an open set $U\subset M$,
with coframe $\{\omega_1,\ldots,\omega_n\}$, then
$$\iota_XdM=\sum_{i=1}^n(-1)^{i-1}\langle X,e_i\rangle\omega_1\wedge\ldots
\wedge\widehat\omega_i\wedge\ldots\wedge\omega_n.$$

Since the $(n-1)-$forms $\omega_1\wedge\ldots\wedge\widehat\omega_i\wedge\ldots
\wedge\omega_n$ are orthonormal in $\Omega^{n-1}(M)$, we get $$|\omega|^2=
\sum_{i=1}^n\langle X,e_i\rangle^2=|X|^2.$$
Then $|\omega|\in\mathcal L^1(M)$ and $d\omega=d(\iota_XdM)=({\rm div}X)dM$. Letting $B_i$ be as in the
preceeding discussion, we get
$$\int_{B_i}({\rm div}X)dM=\int_{B_i}d\omega\stackrel{i}{\longrightarrow}0.$$
But since ${\rm div}X\geq 0$ on $M$, it follows that ${\rm div}X=0$ on $M$.
\end{proof}

Now, let $\overline M^{n+1}$ be an $(n+1)$-dimensional Riemannian manifold. If $M$ is a complete, orientable, immersed hypersurface on $\overline M$, oriented by the choice of a smooth unit vector field $N$, we let $A:TM\to TM$ be the shape operator of $M$, i.e., $AX=-\overline D_XN$, where $\overline D$ stands for the Levi-Civitta connection of
$\overline M$. For $0\leq r\leq n$, the $r-$th Newton tensor $P_r$ on $M$ is recursively defined by
$$P_r=S_rI-AP_{r-1},$$
where $P_0=I$, the identity operator on each tangent space of $M$, and $S_r$ is the $r$-th elementary symmetric function
of the eigenvalues of $A$ (we also set $S_0=1$ and $S_r=0$ if $r>n$). A trivial induction shows that
\begin{equation}\label{eq:Pr as a polynomial in A}
P_r=\sum_{j=0}^r(-1)^jS_{r-j}A^{(j)},
\end{equation}
where $A^{(j)}$ denotes the composition of $A$ with itself, $j$ times ($A^{(0)}=I$).

One step ahead, let $f$ be a smooth function on $M$ and $L_rf={\rm tr}(P_r{\rm Hess}f)$. Then $L_0$ is the
Laplacian of $M$ and, if $\overline M$ has constant sectional curvature, H. Rosenberg proved in~\cite{Rosenberg:93} that
$L_rf={\rm div}(P_r\nabla f)$, where ${\rm div}$ stands for the divergence on $M$. Concerning this setting, one gets the following consequence of Proposition~\ref{prop:first corollary of Yau 76}.

\begin{corollary}\label{coro:Lr versio of corollary of Yau 76}
Let $x:M^n\rightarrow Q^{n+1}(a)$ be a complete oriented hypersurface of a space form $Q^{n+1}(a)$, with
bounded second fundamental form. If $f:M\rightarrow\mathbb R$ is a smooth function such that
$|\nabla f|\in\mathcal L^1(M)$ and $L_rf$ does not change sign on $M$, then $L_rf=0$ on $M$.
\end{corollary}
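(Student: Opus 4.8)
The strategy is to reduce Corollary~\ref{coro:Lr versio of corollary of Yau 76} to Proposition~\ref{prop:first corollary of Yau 76} by taking $X=P_r\nabla f$. For this to work I need two things: that $\operatorname{div}X$ does not change sign, and that $|X|\in\mathcal L^1(M)$. The first is immediate, since by Rosenberg's formula $\operatorname{div}(P_r\nabla f)=L_rf$ (here the ambient $Q^{n+1}(a)$ has constant sectional curvature, so the hypothesis of that formula is met), and $L_rf$ does not change sign by assumption. So the real content is the integrability of $|P_r\nabla f|$.

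For the integrability, the key point is that a bounded second fundamental form yields a pointwise bound $|P_r|\leq C$ for some constant $C>0$ depending only on $r$, $n$, and the bound on $|A|$. Indeed, from~\eqref{eq:Pr as a polynomial in A} we have $P_r=\sum_{j=0}^r(-1)^jS_{r-j}A^{(j)}$, and each elementary symmetric function $S_{r-j}$ of the eigenvalues of $A$ is bounded in terms of $|A|$ (a symmetric polynomial of degree $r-j$ in the eigenvalues, each of which is at most $|A|$ in absolute value), while $\|A^{(j)}\|\leq|A|^j$; hence $\|P_r\|$ is bounded by a polynomial in $|A|$, which is bounded on $M$. Therefore, at each point, $|P_r\nabla f|\leq\|P_r\|\,|\nabla f|\leq C|\nabla f|$, and since $|\nabla f|\in\mathcal L^1(M)$ by hypothesis, we conclude $|P_r\nabla f|\in\mathcal L^1(M)$ as well.

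With both hypotheses of Proposition~\ref{prop:first corollary of Yau 76} verified for $X=P_r\nabla f$ on the complete noncompact oriented manifold $M^n$, that proposition gives $\operatorname{div}(P_r\nabla f)=0$ on $M$, i.e., $L_rf=0$ on $M$, as claimed. (The noncompact case is the one requiring Yau's version of Stokes' theorem; if $M$ were compact the conclusion would follow directly from the classical divergence theorem together with the fixed sign of $L_rf$.)

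I do not anticipate a serious obstacle here; the only mildly delicate point is making the bound $\|P_r\|\leq C$ explicit and observing that "bounded second fundamental form" is exactly what is needed to propagate $\mathcal L^1$-integrability from $\nabla f$ to $P_r\nabla f$. Everything else is bookkeeping plus the two cited results (Rosenberg's divergence formula and Proposition~\ref{prop:first corollary of Yau 76}).
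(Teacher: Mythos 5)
Your proposal is correct and follows essentially the same route as the paper: bound $\|P_r\|$ via the polynomial expression (\ref{eq:Pr as a polynomial in A}) and the boundedness of $A$, deduce $|P_r\nabla f|\in\mathcal L^1(M)$, and apply Proposition~\ref{prop:first corollary of Yau 76} together with Rosenberg's identity $L_rf=\operatorname{div}(P_r\nabla f)$. Your extra remarks (the explicit polynomial bound and the compact case) only add detail to the same argument.
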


\begin{proof}
If $A$ is the second fundamental form of the immersion, then its eigenvalues are continuous functions on $M$. It thus
follows from (\ref{eq:Pr as a polynomial in A}) that $||P_r||$ is bounded on $M$ whenever $||A||$ is itself bounded on $M$. Therefore, there exists a constant $c>0$ such that $||P_r||\leq c$ on $M$, and hence
$$|P_r\nabla f|\leq||P_r||\,|\nabla f|\leq c|\nabla f|\in\mathcal L^1(M).$$
Since $L_rf={\rm div}(P_r\nabla f)$ does not change sign on $M$, proposition~\ref{prop:first corollary of Yau 76} gives $L_rf=0$ on $M$.
\end{proof}

We now specialize our discussion to the case of a complete oriented hypersurface $x:M^n\rightarrow\mathbb R^{n+1}$. If
$U$ is a parallel vector field in $\mathbb R^{n+1}$, we let $f,g:M\rightarrow\mathbb R$ be given by
\begin{equation}\label{eq:the functions f and g}
f=\langle N,U\rangle\ \ \text{and}\ \ g=\langle x,U\rangle,
\end{equation}
where, as before, $N$ is the unit normal vector field on $M$ that gives its orientation. Letting $U^{\top}$ denote the orthogonal projection of $U$ onto $M$, standard computations (cf.~\cite{Rosenberg:93})
give
\begin{equation}\label{eq:gradients of f and g}
\nabla f=U^{\top},\,\,\nabla g=-A(U^{\top}),
\end{equation}
\begin{equation}\label{eq:Lr of f}
L_rf=-(S_1S_{r+1}-(r+2)S_{r+2})f+U^{\top}(S_{r+1}),
\end{equation}
\begin{equation}\label{eq:Lr of g}
L_rg=-(r+1)S_{r+1}f.
\end{equation}

Specializing a little more, let $u:\mathbb R^n\rightarrow\mathbb R$ be a smooth function and $M^n\subset\mathbb R^{n+1}$
be the graph of $u$, i.e.,
$$M^n=\{(x_1,\ldots,x_n,u(x_1,\ldots,x_n))\in\mathbb R^{n+1};\,(x_1,\ldots,x_n)\in\mathbb R^n\}.$$
We also make $U=(-V,1)$ in the above discussion, where $V$ is a parallel vector field in $\mathbb R^n$.
Following R. Reilly~\cite{Reilly:73}, we can take $N=\frac{1}{W}(-{\rm grad}\,u,1)$ as a unit normal vector field
on $M$, where ${\rm grad}\,u$ is the gradient of $u$ on $\mathbb R^n$ and $W=\sqrt{1+|{\rm grad}\,u|^2}$. This way,
$$U^{\top}=U-\langle U,N\rangle N=\frac{1}{W^2}({\rm grad}\,u-V,\langle {\rm grad}\,u,{\rm grad}\,u-V\rangle),$$
so that $|U^{\top}|\leq\frac{1}{W}|{\rm grad}\,u-V|$. Therefore,
$$\int_M|U^{\top}|dM\leq\int_{\mathbb R^n}\frac{1}{W}|{\rm grad}\,u-V|Wdx=\int_{\mathbb R^n}|{\rm grad}\,u-V|dx,$$
and this is finite if, for instance, there exist positive constants $R$, $c$ and $\alpha$ such that
$|{\rm grad}\,u(p)-V|\leq\frac{c}{|p|^{n+\alpha}}$ whenever $|p|>R$.
We also point out that, in standard coordinates, the second fundamental form of $M$ with respect to the above choice of unit normal vector field is $\frac{1}{W}{\rm Hess}\,u$, where by ${\rm Hess}\,u$ we mean the Hessian form of $u$ on $\mathbb R^n$; hence, the condition that it is bounded amounts to the existence of a constant $c>0$ for which
$$||{\rm Hess}\,u||^2\leq c(1+|{\rm grad}\,u|^2).$$

We can now state and prove the following

\begin{theorem}\label{thm:application to graphs}
Let $M^n\subset\mathbb R^{n+1}$ be the graph of a smooth function $u:\mathbb R^n\rightarrow\mathbb R$, such that
$|\nabla u-V|\in\mathcal L^1(\mathbb R^n)$ for some $V\in\mathbb R^n$ and
$||{\rm Hess}\,u||^2\leq c(1+|{\rm grad}\,u|^2)$, for some $c>0$. If there exists $0\leq r\leq n-1$ such that the elementary symmetric functions $S_{r+1}$ and $S_{r+2}$ do not change sign on $M$, then $M$ has relative nullity $\nu\geq n-r$. In particular, if $S_r\neq 0$, then the graph is foliated by hyperplanes of dimension $n-r$.
\end{theorem}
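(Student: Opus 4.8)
I would prove this by feeding two functions into Corollary~\ref{coro:Lr versio of corollary of Yau 76}, turning the resulting vanishing of two consecutive symmetric functions into a pointwise bound on the rank of the shape operator via an argument about real‑rooted polynomials, and finally appealing to Ferus' theorem. First I would put $U=(-V,1)$ and consider $f=\langle N,U\rangle$ and $g=\langle x,U\rangle$ as in~(\ref{eq:the functions f and g}). The discussion preceding the theorem shows that $M$ is complete with bounded second fundamental form and that $|\nabla f|=|U^{\top}|\in\mathcal L^{1}(M)$; since $\nabla g=-A(U^{\top})$ by~(\ref{eq:gradients of f and g}) and $\|A\|$ is bounded, also $|\nabla g|\in\mathcal L^{1}(M)$. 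I would also record that the two analytic hypotheses together force $|{\rm grad}\,u|$ to be bounded and ${\rm grad}\,u\to V$ at infinity (a large value of ${\rm grad}\,u-V$ at a far point would, since $\|{\rm Hess}\,u\|$ is then bounded, persist over a ball of definite size, contradicting integrability); hence $f=W^{-1}(1+\langle{\rm grad}\,u,V\rangle)\to\sqrt{1+|V|^{2}}>0$ outside a compact set, so $f$ is not identically zero and has constant sign there.

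Next I would apply Corollary~\ref{coro:Lr versio of corollary of Yau 76} to $g$, once with $L_{r}$ and once with $L_{r+1}$. By~(\ref{eq:Lr of g}) and its analogue with $r$ replaced by $r+1$, $L_{r}g=-(r+1)S_{r+1}f$ and $L_{r+1}g=-(r+2)S_{r+2}f$; since $S_{r+1}$, $S_{r+2}$ and $f$ do not change sign, both are sign‑definite, so $S_{r+1}f\equiv 0$ and $S_{r+2}f\equiv 0$ on $M$ (that $f$ itself does not change sign on all of $M$ is the delicate point, discussed below). On $\{f\neq 0\}$ — an open set containing a neighbourhood of infinity — this gives $S_{r+1}=S_{r+2}=0$, hence, by continuity of the principal curvatures, also on $\overline{\{f\neq 0\}}$; on the interior of $\{f=0\}$ one has $U^{\top}=\nabla f\equiv 0$, so ${\rm grad}\,u\equiv V$, ${\rm Hess}\,u\equiv 0$ and $A\equiv 0$ there. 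Thus it suffices to prove ${\rm rank}\,A(p)\le r$ at an arbitrary point $p$ with $S_{r+1}(p)=S_{r+2}(p)=0$.

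This is the algebraic core. Let $\kappa_{1},\dots,\kappa_{n}$ be the principal curvatures at $p$, let $m$ be the number of nonzero ones, and set $q(t)=\prod_{\kappa_{i}\neq 0}(1+\kappa_{i}t)=\sum_{j=0}^{m}S_{j}t^{j}$; this polynomial has only real, nonzero roots, and so do all its derivatives. If $m=r+1$ or $m=r+2$ then $S_{r+1}$ or $S_{r+2}$ equals a product of nonzero numbers, contradicting $S_{r+1}(p)=S_{r+2}(p)=0$. If $m\ge r+3$, then $q^{(r)}$ has degree $m-r\ge 3$ and satisfies $(q^{(r)})'(0)=(r+1)!\,S_{r+1}=0$ and $(q^{(r)})''(0)=(r+2)!\,S_{r+2}=0$; but a real polynomial of degree $\ge 3$ with only real roots whose first two derivatives vanish at a point must vanish there as well — that point is then a multiple root of the derivative, hence, by interlacing of the roots of a polynomial and its derivative, a root of multiplicity $\ge 3$ of the polynomial — so $q^{(r)}(0)=r!\,S_{r}=0$. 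Feeding $S_{r}=S_{r+1}=0$ into the same reasoning for $q^{(r-1)}$ gives $S_{r-1}=0$, and iterating down through $q^{(r-2)},\dots,q^{(0)}=q$ (all of degree $\ge 3$) forces $S_{r-1}=\dots=S_{0}=0$, contradicting $S_{0}=1$. Hence $m\le r$ at every point, i.e.\ ${\rm rank}\,A\le r$ and $\nu\ge n-r$ on $M$.

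Finally, if $S_{r}\neq 0$ then ${\rm rank}\,A\equiv r$ (rank $<r$ would give $S_{r}=0$), so $\nu\equiv n-r$ is constant and equals its minimum on all of $M$; the relative nullity distribution is then a smooth, integrable rank‑$(n-r)$ distribution whose leaves are totally geodesic in $\mathbb R^{n+1}$, hence open pieces of affine $(n-r)$‑planes, and Theorem 5.3 of~\cite{Dajczer:90} (applicable since $M$ is complete and $\nu$ attains its minimum everywhere) shows these leaves are complete, i.e.\ entire $(n-r)$‑dimensional subspaces, so $M$ is foliated by them. The step I expect to be the main obstacle is the sign control in the second paragraph: Corollary~\ref{coro:Lr versio of corollary of Yau 76} genuinely needs $L_{r}g=-(r+1)S_{r+1}\langle N,U\rangle$ to be of one sign, so one must know that $\langle N,U\rangle$ does not change sign on all of $M$ — transparent near infinity, but requiring the global control described above (and immediate if $U$ is everywhere transversal to $M$, in which case $\{f=0\}=\emptyset$). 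The real‑rootedness lemma of the third paragraph, and the structural reason why \emph{two} vanishing symmetric functions — not one — are what make the rank bound work, is the other point that repays care.
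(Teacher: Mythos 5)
Your route is essentially the paper's: feed $f=\langle N,U\rangle$ and $g=\langle x,U\rangle$ into Corollary~\ref{coro:Lr versio of corollary of Yau 76}, deduce $S_{r+1}=S_{r+2}=0$, convert this into the rank bound $\mathrm{rank}\,A\le r$, and finish with Ferus' theorem. Your two deviations are both sound and, if anything, cleaner. Using the pair $L_rg$, $L_{r+1}g$ instead of the paper's pair $L_rg$, $L_rf$ spares you the term $U^{\top}(S_{r+1})$ in (\ref{eq:Lr of f}), which the paper can only discard after first establishing $S_{r+1}\equiv 0$. And your real-rootedness/interlacing argument is a correct, self-contained proof of the implication $S_{r+1}=S_{r+2}=0\Rightarrow\mathrm{rank}\,A\le r$ at a point, which the paper simply quotes as Proposition 1 of~\cite{Caminha:06}. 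One small remark: your case analysis on $\mathrm{int}\{f=0\}$ is vacuous, since $\nabla f=U^{\top}\equiv 0$ on an open set would make $U$ normal there and hence $|f|=|U|=\sqrt{1+|V|^2}\neq 0$; so $\{f\neq 0\}$ is automatically dense and continuity alone propagates $S_{r+1}=S_{r+2}=0$ to all of $M$.

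The one genuine gap is exactly the one you flag and do not close: the global sign of $f$. Your decay argument does give $\mathrm{grad}\,u\to V$ at infinity and hence $f=W^{-1}(1+\langle\mathrm{grad}\,u,V\rangle)>0$ outside a compact set, but nothing you say prevents $\langle\mathrm{grad}\,u,V\rangle$ from dropping below $-1$ on a compact region (a possibility the $\mathcal L^1$ and Hessian hypotheses do not obviously exclude when $V\neq 0$), and then $L_rg=-(r+1)S_{r+1}f$ need not be sign-definite, so Corollary~\ref{coro:Lr versio of corollary of Yau 76} cannot be invoked. For comparison, the paper disposes of this point in a single sentence --- ``since $M$ is a graph, the function $f$ is either positive or negative on $M$'' --- which is the standard statement about the angle function $\langle N,e_{n+1}\rangle=1/W$, i.e.\ the case $V=0$; applied to $U=(-V,1)$ it is an assertion rather than a proof. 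So you have correctly isolated the weakest link of the argument, but your proposal as written leaves the deduction $S_{r+1}f\equiv 0$ unjustified for $V\neq 0$, and this step needs either an additional hypothesis (e.g.\ transversality of $U$, or $V=0$) or a genuinely new argument.
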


\begin{proof}
Letting $f$ and $g$ be as in (\ref{eq:the functions f and g}), it follows from our hypotheses that both
$|\nabla f|$ and $|\nabla g|$ are integrable on $M$. On the other hand, since $M$ is a graph, the function $f$ is either
positive or negative on $M$. Since $S_{r+1}$ doesn't change sign on $M$, (\ref{eq:Lr of g}) assures that the same is true of
$L_rg$, and it follows from Corollary~\ref{coro:Lr versio of corollary of Yau 76} that $L_rg=0$ on $M$. In turn, this last information guarantees that $S_{r+1}$ vanishes on $M$, so that (\ref{eq:Lr of f}) gives
$$L_rf=(r+2)S_{r+2}f.$$
By applying the same reasoning (since $S_{r+2}$ also doesn't change sign on $M$), we get $L_rf=0$ on $M$, and hence
$S_{r+2}=0$ on $M$. Finally, since $S_{r+1}=S_{r+2}=0$, Proposition 1 of~\cite{Caminha:06} gives $S_j=0$ for all
$j\geq r+1$, so that $\nu\geq n-r$.

The last claim follows from a theorem of D. Ferus (theorem 5.3 of~\cite{Dajczer:90}).
\end{proof}

We now have immediately the following Bernstein-type result, where it is not assumed that the hypersurface has constant mean curvature.

%\begin{corollary}\label{coro:application to graphs 2}
%Let $M^n\subset\mathbb R^{n+1}$ be the graph of a smooth function $u:\mathbb R^n\rightarrow\mathbb R$, such that
%$|\nabla u|\in\mathcal L^1(\mathbb R^n)$ and $||{\rm Hess}\,u||^2\leq c(1+|{\rm grad}\,u|^2)$, for some $c>0$.
%If there exists $0\leq r\leq n-1$ such that $S_{r+1}$ and $S_{r+2}$ do not change sign on $M$, then $S_r=0$ at some point of $M$.
%\end{corollary}

\begin{corollary}\label{coro:application to graphs 3}
Let $M^n\subset\mathbb R^{n+1}$ be the graph of a smooth function $u:\mathbb R^n\rightarrow\mathbb R$, such that
$|\nabla u-V|\in\mathcal L^1(\mathbb R^n)$ for some $V\in\mathbb R^n$ and $||{\rm Hess}\,u||^2\leq c(1+|{\rm grad}\,u|^2)$, for some $c>0$. If the mean and scalar curvatures of $M$ do not change sign on it, then $M$ is the hyperplane on $\mathbb R^{n+1}$ orthogonal to $(-V,1)$.
\end{corollary}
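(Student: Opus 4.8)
The plan is to derive Corollary~\ref{coro:application to graphs 3} directly from Theorem~\ref{thm:application to graphs} by applying it with $r=0$ and $r=1$. First I would observe that the hypotheses on $u$ are exactly the same as in the theorem, so nothing needs to be checked there. The mean curvature not changing sign means $S_1$ does not change sign on $M$, and the scalar curvature not changing sign means $S_2$ does not change sign on $M$ (recall that for hypersurfaces of $\mathbb R^{n+1}$ the scalar curvature is, up to a positive constant, equal to $S_2$). Hence the pair $(S_{r+1},S_{r+2})=(S_1,S_2)$ has the required sign behaviour for $r=0$, and Theorem~\ref{thm:application to graphs} yields relative nullity $\nu\geq n$. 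Since $\nu\leq n$ always, we conclude $\nu=n$ on $M$, which forces the second fundamental form $A$ to vanish identically: $M$ is totally geodesic.

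Next I would translate "$A\equiv 0$" into a statement about $u$. The second fundamental form of the graph with respect to $N=\frac1W(-\mathrm{grad}\,u,1)$ is $\frac1W\mathrm{Hess}\,u$, so $A\equiv 0$ is equivalent to $\mathrm{Hess}\,u\equiv 0$ on $\mathbb R^n$. A connected domain on which the Hessian of $u$ vanishes forces $u$ to be an affine function, $u(p)=\langle b,p\rangle+c$ for some $b\in\mathbb R^n$, $c\in\mathbb R$. Thus $M$ is a hyperplane.

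Finally I would pin down the direction of this hyperplane using the integrability hypothesis. Since $\mathrm{grad}\,u\equiv b$ is constant, $|\mathrm{grad}\,u-V|=|b-V|$ is a constant; for this to be integrable over $\mathbb R^n$ it must vanish, so $b=V$. Therefore $u(p)=\langle V,p\rangle+c$, and the graph $M=\{(p,\langle V,p\rangle+c)\}$ is precisely the affine hyperplane with normal direction $(-V,1)$ (one checks $\langle (p,\langle V,p\rangle+c),(-V,1)\rangle=c$ is constant on $M$). This completes the proof.

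I do not expect any genuine obstacle here; the corollary is essentially a packaging of Theorem~\ref{thm:application to graphs} in the case $r=0$, together with the elementary fact that a function with identically vanishing Hessian on connected $\mathbb R^n$ is affine and the observation that a nonzero constant is not $\mathcal L^1(\mathbb R^n)$. The only point requiring a word of care is the identification of the scalar curvature with $S_2$ (so that the hypothesis really does give that $S_2$ does not change sign), and the remark that one only needs the case $r=0$ — the case $r=1$ of the theorem is not even required, since $\nu\geq n$ already gives total geodesy.
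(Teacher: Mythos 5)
Your proof is correct and follows essentially the same route as the paper: identify $S_1=nH$ and $2S_2=n(n-1)R$ so that Theorem~\ref{thm:application to graphs} with $r=0$ gives relative nullity $n$, hence $M$ is a hyperplane, and then the $\mathcal L^1$ condition on $|\mathrm{grad}\,u-V|$ forces the normal direction to be $(-V,1)$. Your write-up simply makes explicit the final step that the paper leaves to ``previous discussions.''
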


\begin{proof}
Letting $H$ and $R$ respectively denote the mean and scalar curvatures of $M$, just note that $S_1=nH$ and (by Gauss' equation) $n(n-1)R=2S_2$, so that $S_1$ and $S_2$ do not change sign on $M$. By the previous result, $M$ has relative nullity $n$ and, since it is complete, it is a hyperplane. The rest follows from our previous discussions.
\end{proof}

\begin{remark}
To see that the conditions on $u$ are not superfluous, consider the following two examples:
\begin{enumerate}
\item If $u(x_1,\ldots,x_n)=(x_1^2+\cdots+x_r^2)(\alpha_{r+1}x_{r+1}+\cdots+\alpha_nx_n)$,
where $\alpha_{r+1},\ldots,\alpha_n$ are real constants, not all zero. If $M$ is the graph of $u$, then, out of the
hyperplane $\alpha_{r+1}x_{r+1}+\cdots+\alpha_nx_n=0$, $M$ has index of relative nullity exactly equal to $n-r$;
in particular, $S_{r+1}=S_{r+2}=0$. On the other hand, $|\nabla u-V|\notin\mathcal L^1(\mathbb R^n)$ for any
$V\in\mathbb R^n$ and there is no $c>0$ such that $||{\rm Hess}\,u||^2\leq c(1+|{\rm grad}\,u|^2)$ for all
$x\in\mathbb R^n$.
\item If $u(x_1,\ldots,x_n)=x_1^2+\cdots+x_n^2$ and $M$ is the graph of $u$, then $S_1,S_2>0$ on $M$ and
$||{\rm Hess}\,u||^2\leq 4n(1+|{\rm grad}\,u|^2)$, although $|\nabla u-V|\notin\mathcal L^1(\mathbb R^n)$ for any
$V\in\mathbb R^n$.
\end{enumerate}
\end{remark}

\section{Foliations of space forms}

We now turn our attention to a more general situation, namely, we consider codimension one foliations of
Riemannian manifolds and try to understand the effect of higher curvatures on the leaves. We remark that, for foliations whose leaves have constant mean curvature, this problem has been considered by Barbosa, Kenmotsu and Oshikiri in~\cite{Barbosa:91}, and also by Bessa, Barbosa and Montenegro in~\cite{Barbosa:08}. 

As before, $\overline M^{n+1}$ is an $(n+1)$-dimensional orientable Riemannian manifold and $\mathcal{F}$ a
smooth foliation of codimension one in $\overline M$. Recall (cf.~\cite{LinsNeto:84}) that $\mathcal{F}$ is transversely orientable if we can choose a smooth unit vector field $N$, defined on $\overline M$, that is
normal to the leaves of $\mathcal{F}$. If this is the case, then, for each $p\in\overline M$, we consider the linear
operator $A:T_{p}\overline M\to T_{p}\overline M$ defined by $A(Y(p))=-\overline D_{Y(p)}N$, where, as before,
$\overline D$ denotes the Levi-Civitta connection of $\overline M$. It is clear that if $Y$ is a smooth vector field on $\overline M$,
then the same is true of $A(Y)$. Moreover, letting $A_{L}$ denote the second fundamental form of a leaf $L$ of
$\mathcal F$, we get $A_{|L}=A_{L}$. Accordingly, we let $P_{r}:T_{p}\overline M\to T_{p}\overline M$ be the
linear operator that coincides with the $r-$th Newton transformation on each leaf of the foliation.

Following~\cite{Barbosa:91}, we let $X=\overline D_NN$, so that $X$ is tangent to the leaves of the foliation and independent of the the choice of the field $N$. In what follows, we compute the divergence of $P_{r}(X)$ on $\overline M$ and on a leaf $L$ of $\mathcal F$.

\begin{proposition}\label{prop:divergence on PrX on L and on the ambient}
Let $\mathcal{F}$ be a smooth, transversely orientable foliation of codimension one of a Riemannian manifold
$\overline M^{n+1}$, $N$ a unit vector field on $\overline M$, normal to the leaves of $\mathcal{F}$ and $X=\overline{D}_{N}N$.
If $L$ is a leaf of $\mathcal F$, then
\begin{eqnarray}\label{eq:div(PrX)}
{\rm div}_{L}(P_{r}(X))&=&\sum_{i=1}^{n}\langle\overline{R}(N,e_{i})N,P_{r} (e_{i})
\rangle+\langle X,{\rm div}_{L}P_{r}\rangle\\
&&+{\rm tr}(A^2P_r)+\langle X,P_r(X)\rangle-N(S_{r+1}),\nonumber
\end{eqnarray}
where $\overline R$ is the curvature tensor of $\overline M$, $\{e_i\}$ is an orthormal frame on $L$ and ${\rm tr}(\,\cdot\,)$ stands for the trace in $L$ for the operator in parentheses. Moreover,
\begin{equation}\label{eq:divergence of PrX on the ambient}
{\rm div}_{\overline M}P_{r}(X)={\rm div}_{L}P_{r}(X)-\langle P_{r}(X),X\rangle.
\end{equation}
\end{proposition}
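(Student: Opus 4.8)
The plan is to compute both divergences directly from the definition, using a well-chosen local frame adapted to the foliation. Fix a leaf $L$ and a point $p\in L$, and choose an orthonormal frame $\{e_1,\ldots,e_n\}$ on $L$ near $p$ that is geodesic at $p$ (so $D^L_{e_i}e_j=0$ at $p$, where $D^L$ is the Levi-Civita connection of $L$); together with $N$ this gives an orthonormal frame $\{e_1,\ldots,e_n,N\}$ of $\overline M$ along $L$. Since $X=\overline D_NN$ is tangent to the leaves, we may write $P_r(X)$ as a section of $TL$ and compute ${\rm div}_L(P_r(X))=\sum_i\langle D^L_{e_i}(P_r(X)),e_i\rangle$. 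Expanding via the product rule, one term is $\sum_i\langle(D^L_{e_i}P_r)(X),e_i\rangle=\langle X,{\rm div}_LP_r\rangle$, and the remaining term is $\sum_i\langle P_r(D^L_{e_i}X),e_i\rangle=\sum_i\langle D^L_{e_i}X,P_r(e_i)\rangle$ using self-adjointness of $P_r$. So the whole problem reduces to understanding the tangential covariant derivative $D^L_{e_i}X=(\overline D_{e_i}X)^\top$ where $X=\overline D_NN$.

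The key computation is therefore to expand $\overline D_{e_i}\overline D_NN$. The natural move is to commute the derivatives: $\overline D_{e_i}\overline D_NN=\overline D_N\overline D_{e_i}N+\overline D_{[e_i,N]}N+\overline R(e_i,N)N$. Now $\overline D_{e_i}N=-A(e_i)$, so $\overline D_N\overline D_{e_i}N=-\overline D_N(A(e_i))$; taking the tangential part and pairing against $P_r(e_i)$, and then summing over $i$, one produces on one hand the term $-N(S_{r+1})$ (since $\sum_i\langle \overline D_N(A(e_i)),P_r(e_i)\rangle$ should assemble, after accounting for how the frame and $A$ vary along $N$, into $N\bigl({\rm tr}(AP_r)\bigr)=N(S_{r+1})$, up to correction terms that cancel against pieces coming from the frame not being parallel in the $N$ direction and from $\overline D_NN=X$ itself). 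The curvature term $\sum_i\langle\overline R(e_i,N)N,P_r(e_i)\rangle=\sum_i\langle\overline R(N,e_i)N,P_r(e_i)\rangle$ appears directly. The bracket term $[e_i,N]$ and the normal components must be handled carefully: $\langle\overline D_{e_i}X,N\rangle=-\langle X,\overline D_{e_i}N\rangle=\langle X,A(e_i)\rangle$, which feeds the $\langle X,P_r(X)\rangle$ contribution once one extracts the normal part of $\overline D_{e_i}X$ and uses $\langle e_i,X\rangle$ sums, while ${\rm tr}(A^2P_r)$ arises from the $\sum_i\langle A(e_i),AP_r(e_i)\rangle$-type terms generated when one keeps track of the second fundamental form appearing twice (once from $\overline D_{e_i}N$ and once from the splitting of $\overline D_N$ acting on tangential objects).

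For the second formula, the relation between the two divergences is the standard one: for any vector field $Z$ tangent to $L$, ${\rm div}_{\overline M}Z={\rm div}_LZ+\langle\overline D_NZ,N\rangle={\rm div}_LZ-\langle Z,\overline D_NN\rangle={\rm div}_LZ-\langle Z,X\rangle$. Applying this with $Z=P_r(X)$ gives ${\rm div}_{\overline M}P_r(X)={\rm div}_LP_r(X)-\langle P_r(X),X\rangle$ immediately, so (\ref{eq:divergence of PrX on the ambient}) costs essentially nothing once (\ref{eq:div(PrX)}) is in hand.

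The main obstacle I expect is the bookkeeping in (\ref{eq:div(PrX)}): correctly tracking which terms are tangential and which are normal when differentiating $X=\overline D_NN$ in the $e_i$ directions, and recognizing that the sum $\sum_i\langle(\overline D_N A)(e_i),P_r(e_i)\rangle$ together with the frame-variation terms reorganizes into exactly $-N(S_{r+1})$ plus the explicit algebraic terms ${\rm tr}(A^2P_r)$ and $\langle X,P_r(X)\rangle$, with nothing left over. This is where one must be most careful about the non-parallelism of the chosen frame along $N$ and about the identity $\langle\overline D_N A(e_i),P_r(e_i)\rangle$ versus $\langle A(\overline D_N e_i),P_r(e_i)\rangle$; a clean way to organize it is to first establish the pointwise identity $\sum_i\langle\overline D_{e_i}X,P_r(e_i)\rangle$ in terms of $\overline D_NA$ and then invoke the trace identity ${\rm tr}(\overline D_N(AP_r))=N(S_{r+1})$ modulo ${\rm div}_L$-type and curvature corrections.
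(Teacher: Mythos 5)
Your overall architecture is the same as the paper's: compute $\mathrm{div}_L(P_r(X))$ in an adapted frame, split off $\langle X,\mathrm{div}_LP_r\rangle$ by the product rule, reduce to $\sum_i\langle\overline D_{e_i}\overline D_NN,P_r(e_i)\rangle$, and commute derivatives to extract the curvature term. Your derivation of (\ref{eq:divergence of PrX on the ambient}) is complete and is exactly the paper's. But the proof of (\ref{eq:div(PrX)}) has a genuine gap precisely where you flag ``the main obstacle'': the mechanism you propose for producing $-N(S_{r+1})$ does not work as stated. You claim that $\sum_i\langle\overline D_N(A(e_i)),P_r(e_i)\rangle$ assembles into $N\bigl(\mathrm{tr}(AP_r)\bigr)=N(S_{r+1})$ up to cancelling corrections. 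First, $\mathrm{tr}(AP_r)=(r+1)S_{r+1}$, so that identification is off by a factor of $r+1$ and would give the wrong coefficient in the final formula. Second, and more importantly, the term that actually survives is not $N(\mathrm{tr}(AP_r))$ but the ``partial'' derivative $\sum_{i,j}N(h_{ij})\,\langle P_r(e_i),e_j\rangle$, where $h_{ij}=\langle A(e_i),e_j\rangle$ --- i.e.\ one differentiates only the matrix of $A$ while holding that of $P_r$ fixed. That this equals $N(S_{r+1})$ is the nontrivial input (it is the statement that $P_r$ is the gradient of $S_{r+1}$ as a function of the shape operator, $\partial S_{r+1}/\partial h_{ij}=(P_r)_{ij}$; the paper imports it from a computation in \cite{Caminha:06}). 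Without this identity the computation does not close, and your plan never names it.

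Two further points you leave unresolved and which are needed to make the ``bookkeeping'' actually cancel. (i) The frame, even if geodesic at $p$ along $L$, is not parallel in the $N$-direction, and the resulting terms are killed by two explicit algebraic cancellations: $\sum_{i,j}l_{ij}\langle A(e_j),P_r(e_i)\rangle=0$ and $\sum_{i,j,k}h_{ij}\langle P_r(e_i),e_k\rangle l_{jk}=0$, where $l_{ij}=\langle\overline D_Ne_i,e_j\rangle$ is antisymmetric and is paired against quantities symmetric in the relevant indices (here one uses that $A$ and $P_r$ commute, being polynomials in $A$). These are the steps that justify your phrase ``correction terms that cancel,'' and they must be exhibited. (ii) Minor but worth checking: with the commutation formula you wrote, $\overline R(e_i,N)N=-\overline R(N,e_i)N$ by antisymmetry in the first two slots, so your asserted equality of the two curvature sums is a sign slip unless you adopt the opposite curvature convention; the sign matters later, since the positivity of $\sum_i\langle\overline R(N,e_i)N,P_1(e_i)\rangle=\mathrm{tr}(P_1)$ in the sphere is what drives the nonexistence theorem.
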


\begin{proof}
Given a point $p\in L$, choose an adapted frame field $\{e_1,\ldots,e_n,e_{n+1}\}$ defined in a
neighborhood of $p$ in $\overline M$, i.e., an orthonormal set of vector fields such that $e_{1},...,e_{n}$
are tangent to the leaves and $e_{n+1}=N$. Ask further that $A(e_{i}(p))=\lambda_{i}e_{i}(p)$, for all $1\leq i\leq n$. If we call $D$ the Levi-Civitta connection of $L$ (and, as before, $\overline D$ that of $\overline M$), then
\begin{eqnarray*}
{\rm div}_{L}P_{r}(X)&=&\sum_{i=1}^{n}\langle D_{e_{i}}P_{r}(X),e_{i}\rangle
=\sum_{i=1}^{n}e_{i}\langle P_{r}(X),e_{i}\rangle-\sum_{i=1}^{n}\langle P_{r}(X),D_{e_i}e_i\rangle\\
&=&\sum_{i=1}^{n}e_{i}\langle X,P_{r}(e_{i})\rangle-\sum_{i=1}^{n}\langle X,P_{r}(D_{e_{i}}e_{i})\rangle\\
&=&\sum_{i=1}^{n}e_{i}\langle\overline{D}_{N}N,P_{r}(e_{i})\rangle-\sum_{i=1}^{n}\langle
\overline{D}_{N}N,P_r(D_{e_i}e_{i})\rangle\\
&=&\sum_{i=1}^{n}\langle\overline{D}_{e_{i}}\overline{D}_{N}N,P_{r}(e_{i})\rangle+
\sum_{i=1}^{n}\langle\overline{D}_{N}N,D_{e_i}P_r(e_i)\rangle\\
&&-\sum_{i=1}^n\langle\overline D_NN,P_r(D_{e_i}e_i)\rangle\\
&=&\sum_{i=1}^{n}\langle\overline{R}(N,e_{i})N,P_{r}(e_{i})\rangle+\sum_{i=1}^{n}\langle
\overline{D}_{N}\overline{D}_{e_{i}}N,P_{r}(e_{i})\rangle\\
&&-\sum_{i=1}^{n}\langle\overline{D}_{[N,e_{i}]}N,P_{r}(e_{i})\rangle
+\sum_{i=1}^{n}\langle\overline{D}_{N}N,D_{e_i}P_r(e_i)\rangle\\
&&-\sum_{i=1}^{n}\langle\overline{D}_{N}N,P_r(D_{e_i}e_i)\rangle\\
&=&\sum_{i=1}^{n}\langle\overline{R}(N,e_{i})N,P_{r}(e_{i})\rangle
-\sum_{i=1}^{n}\langle\overline{D}_{N}A(e_{i}),P_{r}(e_{i})\rangle\\
&&-\sum_{i=1}^{n}\langle\overline{D}_{[N,e_{i}]}N,P_{r}(e_{i})\rangle+\sum_{i=1}^{n}\langle
\overline{D}_{N}N,D_{e_{i}}P_{r}(e_{i})-P_{r}(D_{e_{i}}e_{i})\rangle.
\end{eqnarray*}

Now, substituting the equality
$$[N,e_{i}]=\sum_{j=1}^{n}\langle[N,e_{i}],e_{j}\rangle e_{j}+\langle[N,e_{i}],N\rangle N$$
into the above, we get

\begin{eqnarray*}
{\rm div}_{L}P_{r}(X)&=&\sum_{i=1}^{n}\langle\overline{R}(N,e_{i})N,P_{r}(e_{i})\rangle
-N\Big(\sum_{i=1}^{n}\langle A(e_{i}),P_{r}(e_{i})\rangle\Big)\\
&&+\sum_{i=1}^{n}\langle A(e_{i}),\overline{D}_{N}P_{r}(e_{i})\rangle
-\sum_{i,j=1}^{n}\langle[N,e_{i}],e_{j}\rangle\langle\overline{D}_{e_{j}}N,P_{r}(e_{i})\rangle\\
&&-\sum_{i=1}^{n}\langle[N,e_{i}],N\rangle\langle\overline{D}_{N}N,P_{r}(e_{i})\rangle
+\langle X,{\rm div}_{L}P_{r}\rangle\\
&=&\sum_{i=1}^{n}\langle\overline{R}(N,e_{i})N,P_{r}(e_{i})\rangle
-N\Big(\sum_{i=1}^{n}\langle e_{i},AP_{r}(e_{i})\rangle\Big)\\
&&+\sum_{i=1}^{n}\langle A(e_{i}),\overline{D}_{N}P_{r}(e_{i})\rangle+\langle X,{\rm div}_{L}P_{r}\rangle\\
&&-\sum_{i,j=1}^{n}\langle\overline{D}_{e_{i}}N,e_{j}\rangle\langle A(e_{j}),P_{r}(e_{i})\rangle
+\sum_{i,j=1}^{n}\langle\overline{D}_{N}e_{i},e_{j}\rangle\langle A(e_{j}),P_{r}(e_{i})\rangle\\
&&+\stackrel{=0}{\overbrace{\sum_{i=1}^{n}\langle\overline{D}_{e_{i}}N,N\rangle\langle X,P_{r}
(e_{i})\rangle}}-\sum_{i=1}^{n}\langle\overline{D}_{N}e_{i},N\rangle\langle X,P_{r}(e_{i})\rangle\\
&=&\sum_{i=1}^{n}\langle\overline{R}(N,e_{i})N,P_{r}(e_{i})\rangle-N({\rm tr}AP_{r})+\langle X,{\rm div}_{L}
P_{r}\rangle\\
&&+\sum_{i=1}^{n}\langle A(e_{i}),\overline{D}_{N}P_{r}(e_{i})\rangle+
\sum_{i,j=1}^{n}\langle A(e_{i}),e_{j}\rangle\langle A(e_{j}),P_{r}(e_{i})\rangle\\
&&+\sum_{i,j=1}^{n}\langle\overline{D}_{N}e_{i},e_{j}\rangle\langle A(e_{j}),P_{r}(e_{i})\rangle
+\sum_{i=1}^{n}\langle e_{i},\overline{D}_{N}N\rangle\langle X,P_{r}(e_{i})\rangle\\
&=&\sum_{i=1}^{n}\langle\overline{R}(N,e_{i})N,P_{r}(e_{i})\rangle-N({\rm tr}AP_{r})+\langle X,
{\rm div}_{L}P_{r}\rangle\\
&&+\sum_{i=1}^{n}\langle A(e_{i}),\overline{D}_{N}P_{r}(e_{i})\rangle+\sum_{i,j=1}^{n}\langle A(e_{i}),
e_{j}\rangle\langle e_{j},AP_{r}(e_{i})\rangle\\
&&+\sum_{i,j=1}^{n}\langle\overline{D}_{N}e_{i},e_{j}\rangle\langle A(e_{j}),P_{r}(e_{i})\rangle
+\sum_{i=1}^{n}\langle e_{i},\overline{D}_{N}N\rangle\langle P_{r}(X),e_{i}\rangle\\
&=&\sum_{i=1}^{n}\langle\overline{R}(N,e_{i})N,P_{r}(e_{i})\rangle-N({\rm tr}AP_{r})+\langle X,
{\rm div}_{L}P_{r}\rangle\\
&&+\sum_{i=1}^{n}\langle A(e_{i}),\overline{D}_{N}P_{r}(e_{i})\rangle+\sum_{i=1}^{n}\langle A(e_{i}),
AP_{r}(e_{i})\rangle\\
&&+\sum_{i,j=1}^{n}\langle\overline{D}_{N}e_{i},e_{j}\rangle\langle A(e_{j}),P_{r}(e_{i})\rangle
+\langle\overline{D}_{N}N,P_{r}(X)\rangle\\
&=&\sum_{i=1}^{n}\langle\overline{R}(N,e_{i})N,P_{r}(e_{i})\rangle-N({\rm tr}AP_{r})+\langle X,
{\rm div}_{L}P_{r}\rangle\\
&&+{\rm tr}A^{2}P_{r}+\langle X,P_{r}(X)\rangle
+\sum_{i=1}^{n}\langle A(e_{i}),\overline{D}_{N}P_{r}(e_{i})\rangle\\
&&+\sum_{i,j=1}^{n}\langle\overline{D}_{N}e_{i},e_{j}\rangle\langle A(e_{j}),P_{r}(e_{i})\rangle.
\end{eqnarray*}

In order to understand the last two summands above, let $l_{ij}=\langle\overline{D}_{N}e_{i},e_{j}\rangle$ and
$m_{ji}=\langle A(e_{j}),P_{r}(e_{i})\rangle$. It is not difficult to verify that
$l_{ij}=-l_{ji}$ and $m_{ij}=m_{ji}$, so that
$$\sum_{i,j=1}^{n}\langle\overline{D}_{N}e_{i},e_{j}\rangle\langle A(e_{j}),P_{r}(e_{i})\rangle=\sum_{i,j=1}^{n}l_{ij}m_{ji}=0.$$
On the other hand,
\begin{eqnarray*}
\sum_{i=1}^{n}\langle A(e_{i}),\overline{D}_{N}P_{r}(e_{i})\rangle&=&\sum_{i,j=1}^{n}\langle A(e_{i}),
e_{j}\rangle\langle\overline{D}_{N}P_{r}(e_{i}),e_{j}\rangle\\
&=&\sum_{i,j=1}^{n}\langle A(e_{i}),e_{j}\rangle N\Big(\langle P_{r}(e_{i}),e_{j}\rangle\Big)\\
&&-\sum_{i,j=1}^{n}\langle A(e_{i}),e_{j}\rangle\langle P_{r}(e_{i}),\overline{D}_{N}e_{j}\rangle\\
&=&\sum_{i,j=1}^{n}\langle A(e_{i}),e_{j}\rangle N\Big(\langle P_{r}(e_{i}),e_{j}\rangle\Big)\\
&&-\sum_{i,j,k=1}^{n}\langle A(e_{i}),e_{j}\rangle\langle
P_{r}(e_{i}),e_{k}\rangle\langle e_{k},
\overline{D}_{N}e_{j}\rangle.
\end{eqnarray*}
Letting $h_{ij}=\langle A(e_{i}),e_{j}\rangle$ and $t_{ik}=\langle P_{r}(e_{i}),e_{k}\rangle$,
we get $h_{ij}=h_{ji}$ and $t_{ik}=t_{ki}$, and hence
$$\sum_{i,j,k=1}^{n}\langle A(e_{i}),e_{j}\rangle\langle P_{r}(e_{i}),e_{k}\rangle\langle e_{k},
\overline{D}_{N}e_{j}\rangle=\sum_{i,j,k=1}^{n}h_{ij}t_{ik}l_{jk}=0.$$
Therefore,
\begin{eqnarray*}
\sum_{i=1}^{n}\langle A(e_{i}),\overline{D}_{N}P_{r}(e_{i})\rangle&=&\sum_{i,j=1}^{n}\langle A(e_{i}),e_{j}
\rangle N\Big(\langle P_{r}(e_{i}),e_{j}\rangle\Big)=\sum_{i,j=1}^{n}h_{ij}N(t_{ij})\\
&=&N\Big(\sum_{i,j=1}^{n}h_{ij}t_{ij}\Big)-\sum_{i,j=1}^{n}N(h_{ij})t_{ij}\\
&=&N({\rm tr}(AP_r))-\sum_{i,j=1}^{n}N(h_{ij})t_{ij}.
\end{eqnarray*}
Now, by means of computations analogous to those leading to (17), on page 193 of~\cite{Caminha:06}, we conclude
that $\sum_{i,j=1}^{n}N(h_{ij})t_{ij}=N(S_{r+1})$ at $p$, and this concludes the proof of (\ref{eq:div(PrX)}).

It is now an easy matter to get (\ref{eq:divergence of PrX on the ambient}):
\begin{eqnarray*}
{\rm div}_{\overline M}P_{r}(X)&=&\sum_{i=1}^{n}\langle\overline{D}_{e_{i}}P_{r}(X),e_{i}\rangle
+\langle\overline{D}_{N}P_{r}(X),N\rangle\\
&=&\sum_{i=1}^{n}\langle\overline{D}_{e_{i}}P_{r}(X),e_{i}\rangle-\langle P_{r}(X),
\overline{D}_{N}N\rangle\\
&=&{\rm div}_{L}P_{r}(X)-\langle P_{r}(X),X\rangle.
\end{eqnarray*}
\end{proof}

\begin{remark}\label{rem:caso de ambiente com curvatura constante}
 Concerning the above computations, if $\overline M^{n+1}$ has constant sectional curvature, then Rosenberg proved in~\cite{Rosenberg:93} that ${\rm div}_LP_r=0$, thus simplifying {\rm(\ref{eq:div(PrX)})}. We shall use this fact twice in what follows.
\end{remark}

We now study codimension-one foliations of $\Bbb{S}^{n+1}$ whose leaves have constant scalar curvature, thus extending Corollary $3.5$ of~\cite{Barbosa:91}\footnote{As is the case of~\cite{Barbosa:91} (since even-dimensional spheres cannot have transversely orientable foliations), the interesting case is that of odd-dimensional spheres. However, since the proof does not distinguish between odd and even, we present it in general form.}. 

\begin{theorem}
There is no smooth, transversely orientable foliation of codimension one of the Euclidean sphere $\Bbb{S}^{n+1}$, whose
leaves are complete and have constant scalar curvature greater than one.
\end{theorem}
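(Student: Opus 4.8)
The plan is to mimic the strategy of~\cite{Barbosa:91} for the constant mean curvature case, but now applied to the vector field $P_1(X)$ on a leaf $L$, using Proposition~\ref{prop:divergence on PrX on L and on the ambient} with $r=1$ together with the integral machinery of Corollary~\ref{coro:Lr versio of corollary of Yau 76}. First I would argue by contradiction: suppose such a foliation $\mathcal F$ of $\Bbb S^{n+1}$ exists, with transverse unit field $N$, $X=\overline D_NN$, and each leaf $L$ complete with constant scalar curvature $R>1$. By Gauss' equation, constancy of the scalar curvature is equivalent to constancy of $S_2$ on each leaf, with $S_2>0$ there (this is where the hypothesis $R>1$ enters, via $n(n-1)R = n(n-1) + 2S_2$ for hypersurfaces of $\Bbb S^{n+1}$). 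In particular $S_2$ is a positive constant on $L$, so $N(S_2)$ need not vanish a priori across leaves, but $L_1$-type identities on $L$ only see tangential derivatives, and $\mathrm{div}_LP_1 = 0$ by Rosenberg's result (Remark~\ref{rem:caso de ambiente com curvatura constante}).

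Next I would specialize formula~(\ref{eq:div(PrX)}) with $r=1$ and $\overline M = \Bbb S^{n+1}$, so $\overline R(N,e_i)N = e_i$ and $\langle\overline R(N,e_i)N,P_1(e_i)\rangle = \langle e_i, P_1(e_i)\rangle$; summing gives $\mathrm{tr}\,P_1 = n S_1$. Combined with the vanishing of $\mathrm{div}_LP_1$ and the term $-N(S_2)$, one obtains something like
\begin{equation*}
\mathrm{div}_L(P_1(X)) = nS_1 + \mathrm{tr}(A^2P_1) + \langle X, P_1(X)\rangle - N(S_2).
\end{equation*}
I would then integrate this over the leaf. The key point is to show $P_1(X)$ has integrable norm on the complete leaf $L$: since the ambient is compact and $S_2$ is a positive constant on $L$, the leaf has bounded second fundamental form (the eigenvalue identities $S_1$ bounded would follow from an elliptic/algebraic argument, or one controls $\|A\|$ using that the leaf sits inside a compact manifold and $S_2$ is constant — this needs care), hence $\|P_1\|$ is bounded, and $X = \overline D_NN$ is globally bounded on $\Bbb S^{n+1}$, so $|P_1(X)| \in \mathcal L^1(L)$ provided $L$ has finite volume; again finiteness of volume or the integrability must be extracted from completeness plus the constant-$S_2$ constraint. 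With integrability in hand, Proposition~\ref{prop:first corollary of Yau 76} (or the argument behind it) forces $\mathrm{div}_L(P_1(X))$ to integrate to zero, or — if one first checks it has a sign — to vanish identically.

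The contradiction should then come from a pointwise sign analysis of the right-hand side. Here $P_1$ is positive definite when $S_2>0$ and $S_1$ has the appropriate sign (an elementary-symmetric-function inequality, e.g. Gårding-type: $S_2>0$ forces $P_1$ definite after orienting so that $S_1>0$), so $\langle X,P_1(X)\rangle \ge 0$, and $\mathrm{tr}(A^2P_1) \ge 0$ likewise, and $nS_1 > 0$; meanwhile the troublesome term $-N(S_2)$ must be handled by also applying the ambient divergence formula~(\ref{eq:divergence of PrX on the ambient}) and integrating over $\Bbb S^{n+1}$, where $\int_{\Bbb S^{n+1}}\mathrm{div}_{\overline M}P_1(X) = 0$ by compactness, to kill the cross term $\langle P_1(X),X\rangle$ and isolate a manifestly positive integrand, yielding $0 = \int(\text{positive}) > 0$. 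I expect the main obstacle to be precisely the justification of the integrability of $|P_1(X)|$ on a noncompact complete leaf and the control of the term $N(S_2)$ (which is transverse to the leaf and not governed by the constancy of $S_2$ along leaves); reconciling the leafwise identity with the ambient identity so that all sign-indefinite terms cancel is the delicate heart of the argument, and is exactly where the hypothesis "greater than one" (ensuring strict positivity and definiteness, not merely $S_2 \ge 0$) must be used decisively.
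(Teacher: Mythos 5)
Your outline correctly isolates two of the paper's ingredients: the positive definiteness of $P_1$ (from $S_2>0$ constant one gets $S_1^2=|A|^2+2S_2>\lambda_i^2$, hence $S_1-\lambda_i>0$ after orienting so that $S_1>0$) and the specialization of (\ref{eq:div(PrX)}) to the sphere with ${\rm div}_LP_1=0$. But there is a genuine gap exactly where you flag your own doubts, and your proposed repairs do not close it. The missing idea is Proposition 2.31 of~\cite{Barbosa:91}: since the scalar curvature function $R$ is constant on leaves, either $R$ is constant on all of $\Bbb S^{n+1}$, or there exists a \emph{compact} leaf $L$ on which $R$ attains its maximum. This dichotomy is what simultaneously disposes of both difficulties. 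In the second case one works only on that compact leaf, so no $\mathcal L^1$ hypothesis on $|P_1(X)|$ is ever needed (the divergence theorem suffices), and $N(S_2)=0$ there because $S_2$ attains its maximum along $L$; in the first case $N(S_2)=0$ everywhere and one integrates the ambient identity (\ref{eq:divergence of PrX on the ambient}) over the compact sphere. Without this dichotomy you are left trying to prove $|P_1(X)|\in\mathcal L^1(L)$ on a noncompact complete leaf, for which there is no available estimate (bounded $\|A\|$ and bounded $|X|$ give nothing if $L$ has infinite volume), and trying to control $N(S_2)$, which the leafwise constancy of $S_2$ does not touch.

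Your specific fallback --- integrating ${\rm div}_{\overline M}P_1(X)$ over $\Bbb S^{n+1}$ to ``kill'' the bad terms --- does cancel the cross term $\langle X,P_1(X)\rangle$, but it does \emph{not} kill $N(S_2)$. Integrating by parts with ${\rm div}_{\overline M}N=-S_1$ gives
\begin{equation*}
\int_{\Bbb S^{n+1}}N(S_2)\,dV=\int_{\Bbb S^{n+1}}S_1S_2\,dV,
\end{equation*}
and since ${\rm tr}(A^2P_1)=S_1S_2-3S_3$, your global identity collapses to $\int_{\Bbb S^{n+1}}\left({\rm tr}(P_1)-3S_3\right)dV=0$, whose integrand has no definite sign ($S_3$ is uncontrolled). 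So the ``manifestly positive integrand'' you hope to isolate does not materialize unless $N(S_2)$ vanishes identically, which is precisely the case $R\equiv{\rm const}$ of the dichotomy. (Two minor slips: ${\rm tr}\,P_1=(n-1)S_1$, not $nS_1$, though only positivity matters; and the curvature term in (\ref{eq:div(PrX)}) for the sphere is $\sum_i\langle e_i,P_1(e_i)\rangle={\rm tr}\,P_1$, which is positive by definiteness, so the exact coefficient is immaterial.)
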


\begin{proof}
Suppose there exists a foliation $\mathcal{F}$ of $\Bbb{S}^{n+1}$ with the properties above, let $N$ be a unit vector field on $\mathbb S^{n+1}$ normal to the leaves and $A_L(\,\cdot\,)=-\overline D_{(\,\cdot\,)}N$ be the shape operator of a leaf $L$ with respect to $N$. If $R_{L}$ denotes the constant value of the scalar curvature of the leaf $L$ of $\mathcal{F}$, it follows from Gauss' equation that $2S_{2}=n(n-1)(R_{L}-1)$, so that $S_{2}$ is a positive constant.

If $\lambda_1,\ldots,\lambda_n$ are the eigenvalues of $A_L$, then
$$S_{1}^{2}=|A|^{2}+2S_{2}>|A|^{2}\geq\lambda_{i}^{2}.$$
Choosing the orientation in such a way that $S_{1}>0$, it follows from the above inequalities that
$S_{1}-\lambda_{i}>0$. This says that $P_{1}$ is positive definite on $L$.

Since the scalar curvature function $R:\Bbb{S}^{n+1}\to\Bbb{R}$, that associates to each point
the value of the scalar curvature of the leaf of $\mathcal{F}$ through that point, is constant
on the leaves, Proposition 2.31 of~\cite{Barbosa:91} gives that either $R$ is constant on
$\Bbb{S}^{n+1}$, or there exists a compact leaf $L$ of $\mathcal{F}$ having the property that
$$R_{L}=\max_{p\in\Bbb{S}^{n+1}}R(p).$$

Assume first that $R$ is nonconstant on $\Bbb{S}^{n+1}$, and let $L$ be the compact leaf of $\mathcal F$ with maximal
scalar curvature, so that $N(S_{2})=0$ along $L$. The curvature operator of the sphere, together with Remark~\ref{rem:caso de ambiente com curvatura constante} and (\ref{eq:div(PrX)}), now give
$${\rm div}_{L}P_{1}(X)={\rm tr}(P_1)+{\rm tr}(A^{2}P_1)+\langle X,P_{1}(X)\rangle>0.$$
On the other hand, since $L$ is compact, divergence theorem applied to $L$ gives
${\rm div}_{L}P_{1}(X)=0$, which is a contradiction.

Now, assume that $R$ is constant on $\Bbb{S}^{n+1}$. Then $N(S_{2})=0$, and (\ref{eq:div(PrX)}) and
(\ref{eq:divergence of PrX on the ambient}) give
$${\rm div}P_{1}(X)={\rm tr}(P_1)+{\rm tr}(A^{2}P_1)>0.$$
However, integration over $\Bbb{S}^{n+1}$ yields ${\rm tr}(P_1)={\rm tr}(A^{2}P_1)=0$, which contradics the positive definiteness of $P_1$. This concludes the proof of the theorem.
\end{proof}

\begin{remark}
We point out that there are several families of compact tori in $\mathbb S^{n+1}$ with constant scalar curvature greater than one, and refer the reader to Example $4.4$ of~\cite{Caminha2:06} for the details. Of course, none of them constitutes a foliation of $\mathbb S^{n+1}$.
\end{remark}

We finish this paper with a generalization of Theorem~\ref{thm:application to graphs} to a singular foliation of
$\mathbb R^{n+1}$, by which we mean a foliation $\mathcal F$ of $\mathbb R^{n+1}\setminus S$, where $S\subset\mathbb R^{n+1}$ is a set of Lebesgue measure zero. In order to state the result, if $\mathcal{F}$ is a transversely orientable such foliation of $\Bbb{R}^{n+1}$, with unit normal vector field $N$ normal to the leaves, then (as before) we let $X=\overline D_NN$, where $\overline D$ is the Levi-Civitta connection of $\mathbb R^{n+1}$. We also recall the reader that an isometric immersion $x:M^n\rightarrow\overline M^{n+1}$ is said to be $r-$minimal if $S_{r+1}=0$ on $M$.

\begin{theorem}
Let $\mathcal{F}$ be a smooth, transversely orientable singular foliation of codimension one of $\Bbb{R}^{n+1}$, whose
leaves are complete, $r-$minimal and such that $S_r$ doesn't change sign on them. If $|X|\in\mathcal L^{1}$
and $|A|$ is bounded along each leaf, then the relative nullity of each leaf is at least $n-r$. In particular, if
$S_r\neq 0$ on a leaf, then this leaf is foliated by hyperplanes of dimension $n-r$.
\end{theorem}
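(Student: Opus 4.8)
The plan is to mimic the proof of Theorem~\ref{thm:application to graphs}, replacing the functions $f=\langle N,U\rangle$ and $g=\langle x,U\rangle$ coming from a fixed parallel vector field with the intrinsic vector field $X=\overline D_NN$ on each leaf, and replacing the appeal to Corollary~\ref{coro:Lr versio of corollary of Yau 76} with the divergence formula of Proposition~\ref{prop:divergence on PrX on L and on the ambient}. First I would fix a leaf $L$; since the ambient is flat, the curvature term in (\ref{eq:div(PrX)}) vanishes and, by Remark~\ref{rem:caso de ambiente com curvatura constante}, so does $\langle X,{\rm div}_LP_r\rangle$, leaving
\begin{equation*}
{\rm div}_{L}(P_{r}(X))={\rm tr}(A^2P_r)+\langle X,P_r(X)\rangle-N(S_{r+1}).
\end{equation*}
Because the leaves are $r$-minimal, $S_{r+1}=0$ identically, hence $N(S_{r+1})=0$, so this reduces to ${\rm div}_L(P_r(X))={\rm tr}(A^2P_r)+\langle X,P_r(X)\rangle$.

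Next I would check the integrability hypothesis needed to apply Proposition~\ref{prop:first corollary of Yau 76} to the vector field $P_r(X)$ on the complete noncompact manifold $L$ (noncompactness is automatic since a compact leaf of a foliation of $\mathbb R^{n+1}$ is impossible). Exactly as in the proof of Corollary~\ref{coro:Lr versio of corollary of Yau 76}, boundedness of $|A|$ on $L$ together with (\ref{eq:Pr as a polynomial in A}) gives $\|P_r\|\le c$ on $L$, whence $|P_r(X)|\le c|X|\in\mathcal L^1(L)$. So it remains only to know that ${\rm div}_L(P_r(X))$ does not change sign; this is where the relation ${\rm div}_L(P_r(X))={\rm tr}(A^2P_r)+\langle X,P_r(X)\rangle$ must be shown to have a fixed sign, say $\ge 0$. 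For an $r$-minimal hypersurface with $S_r\neq 0$ (and $S_r$ of a fixed sign), a standard fact is that $P_r$ is semi-definite; more precisely, choosing the orientation suitably one gets $P_r\ge 0$, which makes $\langle X,P_r(X)\rangle\ge 0$ and ${\rm tr}(A^2P_r)={\rm tr}(A P_r A)\ge 0$ as well, since $A P_r A$ is a product of a symmetric operator with a positive semi-definite one in the appropriate order. Granting this, Proposition~\ref{prop:first corollary of Yau 76} yields ${\rm div}_L(P_r(X))\equiv 0$, and therefore ${\rm tr}(A^2P_r)\equiv 0$ and $\langle X,P_r(X)\rangle\equiv 0$ on $L$.

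From ${\rm tr}(A^2P_r)=0$ and the semi-definiteness of $P_r$ I would conclude that $A$ vanishes on the image of $P_r$; combined with $P_r\ge0$ this should force $S_{r+2}=0$ on $L$ (one can also argue directly: $S_{r+1}=0$ already, and the vanishing of ${\rm tr}(A^2P_r)$ together with the Newton recursion $P_{r+1}=S_{r+1}I-AP_r$ gives $S_{r+2}=\frac{1}{r+2}{\rm tr}(AP_{r+1})$ expressed through ${\rm tr}(A^2P_r)$). Then, just as in Theorem~\ref{thm:application to graphs}, $S_{r+1}=S_{r+2}=0$ lets me invoke Proposition~1 of~\cite{Caminha:06} to get $S_j=0$ for all $j\ge r+1$, which by definition means the relative nullity $\nu$ of $L$ is at least $n-r$. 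The final sentence then follows exactly as before: if moreover $S_r\neq 0$ on $L$, Ferus' theorem (Theorem~5.3 of~\cite{Dajczer:90}), applied to the complete leaf $L$, shows $L$ is foliated by affine $(n-r)$-dimensional subspaces of $\mathbb R^{n+1}$.

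The main obstacle I anticipate is pinning down the sign of ${\rm div}_L(P_r(X))$, i.e., establishing that under $r$-minimality with $S_r$ of a fixed sign one may orient $L$ so that $P_r$ is positive semi-definite and hence both ${\rm tr}(A^2P_r)\ge0$ and $\langle X,P_r(X)\rangle\ge0$ simultaneously. This is the analogue of the positive-definiteness argument for $P_1$ used in the sphere theorem above, but for general $r$ it relies on the Gårding-type inequalities for elementary symmetric functions (the hyperbolic polynomial / Newton inequality machinery) and requires care about which consecutive curvatures are assumed not to vanish; all the other steps are direct transcriptions of arguments already appearing in the paper.
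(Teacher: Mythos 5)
Your proposal is correct and follows essentially the same route as the paper: reduce (\ref{eq:div(PrX)}) via flatness, Remark~\ref{rem:caso de ambiente com curvatura constante} and $S_{r+1}=0$ to ${\rm div}_L(P_r(X))={\rm tr}(A^2P_r)+\langle X,P_r(X)\rangle$, apply Proposition~\ref{prop:first corollary of Yau 76} to $P_r(X)$ (using $\|P_r\|$ bounded and $|X|\in\mathcal L^1$), deduce $S_{r+2}=0$ from ${\rm tr}(A^2P_r)=S_1S_{r+1}-(r+2)S_{r+2}$, and finish with Proposition~1 of~\cite{Caminha:06} and Ferus' theorem. The one point you flag as an obstacle --- semi-definiteness of $P_r$ under $r$-minimality with $S_r$ of fixed sign --- is exactly what the paper settles by citing Hounie and Leite~\cite{Hounie:99}; note also that you need not normalize $P_r\ge 0$, since semi-definiteness of either sign already forces the two surviving terms to share a sign, which is all Proposition~\ref{prop:first corollary of Yau 76} requires.
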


\begin{proof}
Let $L$ be a leaf of $\mathcal F$. Since $S_r$ doesn't change sign on $L$, we again have $P_{r}$ semi-definite by a result of J. Hounie and M. L. Leite~\cite{Hounie:99}, so that ${\rm tr}(A^2P_r)$ and $\langle X,P_r(X)\rangle$ are both nonnegative or both nonpositive on $L$. Therefore, by applying (\ref{eq:div(PrX)}) and Remark~\ref{rem:caso de ambiente com curvatura constante} again, we get
$${\rm div}_{L}(P_{r}(X))={\rm tr}(A^2P_r)+\langle X,P_r(X)\rangle,$$
which is either greater than or less than zero on $L$. It thus follows from
Proposition~\ref{prop:first corollary of Yau 76} that ${\rm div}_{L}P_{r}(X)=0$, and, since $S_{r+1}=0$ on $L$, we get
$${\rm tr}(A^{2}P_{r})=-(r+2)S_{r+2}=0.$$

This way, as before we get $S_k=0$ for all $k\geq r+1$, and it suffices to reason as in the end of the proof of
Theorem~\ref{thm:application to graphs}, invoking Ferus' theorem.
\end{proof}

\begin{remark}
As an example of the situation described in the theorem above, one has the singular foliation of $\mathbb R^{n+1}$ by
the concentric cylinders $\mathbb S_R^r\times\mathbb R^{n-r}$. Here, $\mathbb S_R^r\subset\mathbb R^{r+1}$ denotes the
sphere with center $0\in\mathbb R^r$ and radius $R>0$; the singular set of the foliation is the $(n-r)-$hyperplane
$\{0\}\times\mathbb R^{n-r}$ in $\mathbb R^{n+1}$.
\end{remark}

\end{document}